\theoremstyle{plain}
\newtheorem{theorem}{Theorem}
\newtheorem{proposition}{Proposition}
\newtheorem{corollary}{Corollary}
\theoremstyle{definition}
\theoremstyle{remark}
\begin{document}

\title{On Orbit Spaces of Distributive Binary $G$-Spaces}

\author{Pavel S. Gevorgyan}
\address{Moscow Pedagogical State University}
\email{pgev@yandex.ru}

\begin{abstract}
The orbit space of a distributive binary $G$-space is studied. A number of its properties in the case of a compact binarily acting group $G$ are established.
\end{abstract}

\keywords{Binary action, distributive binary $G$-space, biequivariant map, orbit space, topological group.}

\subjclass{54H15; 57S99}

\maketitle

\section{Introduction}

The notion of a binary $G$-space, or the group of binary transformations of a topological space, was introduced in \cite{Gev}. All binary $G$-spaces and biequivariant maps form a category, which is a natural extension of the category of $G$-spaces and equivariant maps.

The task of carrying over the basic notions of the theory of $G$-spaces to the theory of binary $G$-spaces and studying them is not always easy. Substantial differences arise already in the consideration of bi-invariant sets. For example, a union of bi-invariant subsets of a binary $G$-space is not necessarily a bi-invariant subset.

Difficulties are also involved in the study of orbits of points. The orbit space cannot be constructed for some binary $G$-spaces, because orbits of points may intersect in the general case. However, in the case of a distributive binary $G$-space $X$, the orbits of points have the form $G(x,x)$, $x \in X$ (Proposition \ref{th-distr}), and divide the given space into disjoint classes of subsets (Proposition \ref{prop-5}). This makes it possible to construct the orbit space of a distributive binary $G$-space in the standard way.

We prove certain important properties of the orbit space $X|G$ of a distributive binary $G$-space $X$. In the case of a compact group $G$, the orbit projection $\pi:X\to X|G$ is a closed and proper map (Theorem \ref{th-2}). The orbit space $X|G$ is Hausdorff (Theorem \ref{th-1}), and it is compact (locally compact) if and only if so is $X$ (Theorem \ref{th-3}).

\section{Auxiliary Definitions And Results}

Let $G$ be any topological group, and let $X$ be any topological space. In this paper, all spaces are
assumed to be Hausdorff.

A continuous map $\alpha :G\times X^2\to X$ is called a binary action of the topological group $G$ on the
space $X$ if, for any $g,h\in G$ and any $x,x' \in X$, we have
\begin{equation}\label{eq(1)}
gh(x,x')=g(x, h(x,x')),
\end{equation}
\begin{equation}\label{eq(2)}
e(x,x')=x',
\end{equation}
where $g(x,x') = \alpha (g, x,x')$.

A space $X$ with a fixed binary action of a group $G$, that is, a triple $(G,X,\alpha)$, is called a binary
$G$-space, or a \emph{group of binary transformations}.

Consider the set of all continuous binary operations on a space $X$. On this set, we define an operation “$\ast$” by
\begin{equation}\label{eq4}
(f*\varphi) (x,x')=f(x, \varphi(x,x')),  \quad x,x'\in X,
\end{equation}
where $f, \varphi : X^2 \to X$ is any continuous binary operation on $X$. Thereby, we obtain a topological monoid with the identity element $e(x,x')=x'$. A binary operation   $f : X^2 \to X$ is said to be \emph{invertible} if there exists a binary operation   $\varphi : X^2 \to X$ for which $f\ast \varphi = \varphi \ast f = e$. In this case, $\varphi $ is called the \emph{inverse binary operation} for $f$ and denoted by $f^{-1}$. The following invertibility criterion for binary operations holds.

\begin{theorem}\label{th011}\cite{Gev2}
Let $X$ be a locally compact locally connected space, and let $f:X^2\to X$ be a continuous binary operation. If the map $f_t:X\to X$ defined by $f_t(x)=f(t,x)$ is a homeomorphism for each $t\in X$, then the binary operation $f$ is invertible, and the inverse binary operation $f^{-1}:X^2 \to X$ is defined by $f^{-1}(t,x)=f_t^{-1}(x)$.
\end{theorem}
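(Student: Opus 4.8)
The plan is to separate the algebraic content from the topological content. I define $\varphi\colon X^2\to X$ by $\varphi(t,x)=f_t^{-1}(x)$. The two required identities are then purely formal: since $(f\ast\varphi)(x,x')=f(x,\varphi(x,x'))=f_x(f_x^{-1}(x'))=x'$ and $(\varphi\ast f)(x,x')=\varphi(x,f(x,x'))=f_x^{-1}(f_x(x'))=x'$, we obtain $f\ast\varphi=\varphi\ast f=e$ the moment $\varphi$ is known to be a \emph{continuous} binary operation. Thus the whole theorem reduces to a single point: the \emph{joint} continuity of $\varphi(t,x)=f_t^{-1}(x)$. Equivalently, the continuous bijection $F\colon X^2\to X^2$, $F(t,x)=(t,f_t(x))$, must be shown to be a homeomorphism, since $\varphi$ is the second coordinate of $F^{-1}$.

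To prove continuity of $\varphi$ at a point $(t_0,x_0)$, I put $y_0=f_{t_0}^{-1}(x_0)$ and let a neighborhood of $y_0$ be given. Using local compactness together with local connectedness, I would choose a \emph{connected} open neighborhood $W\ni y_0$ with compact closure lying inside the given neighborhood. Because $f_{t_0}$ is a homeomorphism, $f_{t_0}(W)$ is open and contains $x_0$, while $f_{t_0}(\partial W)$ is compact and omits $x_0$. I would then select a connected open $V\ni x_0$ whose compact closure satisfies $\overline V\subseteq f_{t_0}(W)$ and $\overline V\cap f_{t_0}(\partial W)=\emptyset$, and finally, exploiting that $\partial W$ is compact so that projection along the fiber $\partial W$ is a closed map, find a \emph{connected} open neighborhood $U\ni t_0$ with $f_t(\partial W)\cap\overline V=\emptyset$ for every $t\in U$.

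The heart of the argument is a connectedness dichotomy. For each fixed $t\in U$ the homeomorphism $f_t$ splits $X$ as the disjoint union of the two open sets $f_t(W)$ and $X\setminus f_t(\overline W)$ together with the compact ``wall'' $f_t(\partial W)$; since $V$ is connected and misses this wall, $V$ lies entirely in one of the two open pieces. This partitions $U=A\sqcup B$, where $A=\{t: V\subseteq f_t(W)\}$ and $B=\{t: V\cap f_t(\overline W)=\emptyset\}$, with $t_0\in A$. I would show $A$ is open by writing $A=\pi_U(\{(t,y)\in U\times W: f(t,y)\in V\})$ and invoking the openness of the projection $\pi_U$, and I would show $B$ is open by rewriting it, with the help of $\overline V\cap f_t(\partial W)=\emptyset$, as $B=\{t: f_t(\overline W)\cap\overline V=\emptyset\}$ and invoking the closedness of projection along the compact factor $\overline W$. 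Connectedness of $U$ then forces $A=U$, i.e. $V\subseteq f_t(W)$ for all $t\in U$; equivalently $\varphi(U\times V)\subseteq W$, which is precisely the continuity sought.

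Since the algebra is immediate, the main obstacle is exactly this joint continuity, and within it the crux is arranging the two halves $A$ and $B$ of the partition to be \emph{simultaneously} open. This is where both hypotheses become indispensable: local connectedness supplies the connected neighborhoods $W,V,U$ (so that the connected set $V$ cannot straddle the separating wall), while local compactness supplies the compact closures $\overline W,\overline V$ and the accompanying closed-projection (tube-lemma) argument. Once $A$ and $B$ are known to be open, the conclusion follows in one line from the connectedness of $U$.
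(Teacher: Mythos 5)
This theorem is stated in the paper with a citation to [Gev2] and is not proved there, so there is no in-paper argument to compare yours against; I can only assess your proposal on its own terms, and it is correct and complete. You rightly observe that the identities $f\ast\varphi=\varphi\ast f=e$ for $\varphi(t,x)=f_t^{-1}(x)$ are formal consequences of the definition of $\ast$ (equation \eqref{eq4}), so everything rests on the joint continuity of $\varphi$, and your proof of that is the classical Arens-type argument for continuity of inversion on homeomorphism groups of locally compact, locally connected spaces, adapted to the parametrized family $\{f_t\}$. The construction of the connected neighborhoods $W,V,U$ with compact closures, the tube-lemma step producing $U$ with $f_t(\partial W)\cap\overline V=\emptyset$, and the dichotomy $U=A\sqcup B$ all check out; the one place where a reader must pause is your identification $A=\pi_U(\{(t,y)\in U\times W: f(t,y)\in V\})$, which is not true a priori but does follow from the dichotomy (the right-hand side is $\{t\in U: f_t(W)\cap V\neq\emptyset\}$, and the dichotomy shows this set coincides with $A$ inside $U$); similarly the openness of $B$ uses that $\overline V$ is itself connected and misses the wall, so the dichotomy upgrades $V\cap f_t(\overline W)=\emptyset$ to $\overline V\cap f_t(\overline W)=\emptyset$. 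You implicitly use both facts in the right order, and the degenerate case $\partial W=\emptyset$ causes no trouble, so the argument stands as a valid proof of the cited result.
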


We denote the group of all invertible continuous binary operations on a topological space $X$ by $H_2(X)$.

A binary action $\alpha$ of a group $G$ on $X$ assigns a continuous binary operation $\alpha_g:X^2 \to X$ on $X$ to each element $g \in G$ by the formula $\alpha_g(x,x') = \alpha (g,x,x')$. The correspondence $g\to \alpha_g$ is a continuous homomorphism from the group $G$ to the group $H_2(X)$ of all continuous invertible binary operations on $X$ \cite{Gev2}. Thus, the elements of $G$ can be treated as continuous binary operations on $X$ acting by the rule $g(x,x')=\alpha_g(x,x')$.

Let  $t\in X$ be any element. Consider the continuous map $\alpha_t:G\times X\to X$ defined by
\begin{equation}\label{eq1}
\alpha_t(g,x)= \alpha(g,t,x).
\end{equation}
The map $\alpha_t$ is an action of the group $G$ on the space $X$. Thus, a binary action $\alpha$ of a group $G$ on a space $X$ induces a family of “ordinary” actions $\{\alpha_t\}$, $t\in X$, of the group $G$ on $X$.

Let $(G,X,\alpha)$ and $(G,Y,\beta)$ be binary $G$-spaces. A continuous map $f:X\to Y$ is said to be \emph{biequivariant}, or \emph{$G$-biequivariant}, if
$$f(\alpha(g,x,x'))=\beta(g,f(x),f(x')),$$
or %
$$f(g(x,x'))=g(f(x),f(x')),$$
for all $g\in G$ and $x,x' \in X$.

A biequivariant map $f:X\to Y$ which is simultaneously a homeomorphism is called an \emph{equivalence} of binary $G$-spaces, or a \emph{biequimorphism}.

All binary $G$-spaces and biequivariant maps form a category, which we denote by $G-Top^2$.

It is easy to show that any biequivariant map $f:X\to Y$ is equivariant with respect to the induced
actions $\alpha_t$ and $\beta_{f(t)}$ for each $t \in X$.

Now let $X$ be any $G$-space. We define a binary $G$-action on $X$ by
\begin{equation}\label{eq11}
g(x,x')=g x',
\end{equation}
for all  $g\in G$ and $x, x'\in X$.

Note that if $X$ and $Y$ are $G$-spaces, then any equivariant map $f:X\to Y$ is biequivariant with respect
to the action \eqref{eq11}. Therefore, the category  $G-Top^2$ is a natural extension of the category $G-Top$ of all $G$-spaces and equivariant maps.

Let $A$ and $B$ be subsets of a binary $G$-space $X$, and let $K$ be a subset of the group $G$. We put 
$$K(A,B) = \{g(a,b); \ g\in K, a\in A, b\in B\}.$$

A subset $A$ of a binary $G$-space $X$ is said to be \emph{bi-invariant}, or \emph{$G$-bi-invariant}, if $G(A, A) = A$. A bi-invariant subset $A\subset X$ is called a \emph{binary G-subspace} of $X$.

If $G$ is a compact group and $U\subset X$ is an open subset of $X$, then $G(U,U)$ is an open subset of $X$ as well. The following theorem is valid.

\begin{theorem} \label{th-zamkn}
Let $G$ be a compact group, and let $X$ be a binary $G$-space. If $A$ is a closed subset of $X$, then $G(A, A)$ is closed in $X$.
\end{theorem}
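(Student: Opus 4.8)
The plan is to show $G(A,A)$ is closed by exploiting the compactness of $G$ via a standard closed-map / sequential-limit argument. The key map to study is the continuous map $\alpha : G \times X^2 \to X$, restricted to $G \times A \times A$. Since $G(A,A)$ is precisely the image $\alpha(G \times A \times A)$, the goal is to prove that this image is closed in $X$.

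First I would try to realize $G(A,A)$ as the image of a closed set under a closed (or proper) projection. The standard trick: consider the product $G \times A \times A \subset G \times X^2$, which is closed in $G \times X^2$ because $A$ is closed in $X$. The obstruction is that the image of a closed set under a continuous map need not be closed in general. This is exactly where compactness of $G$ must be used. I would invoke the classical fact that if $G$ is compact, then the projection $p : G \times X \to X$ is a closed map (the ``tube lemma'' / Kuratowski's theorem). The difficulty is that $\alpha$ is not itself a projection, so I cannot apply this directly to $\alpha$; instead I need to route through a projection.

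The concrete plan, then, is to form the graph-type map $\Phi : G \times X^2 \to X^2 \times X$ given by $\Phi(g,x,x') = (x, x', g(x,x'))$, or more simply to use a point-chasing argument. I would take a point $y \in X$ in the closure of $G(A,A)$ and produce a net (or sequence, if $X$ is suitably nice) $g_\lambda(a_\lambda, b_\lambda) \to y$ with $g_\lambda \in G$, $a_\lambda, b_\lambda \in A$. By compactness of $G$, the net $\{g_\lambda\}$ has a subnet converging to some $g \in G$. The main obstacle is that the points $a_\lambda, b_\lambda$ need not lie in a compact set, so I cannot automatically extract convergent subnets of them; this is the heart of the matter. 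To overcome it, I would apply the group element $g^{-1}$ (using the binary inverse coming from the homomorphism $G \to H_2(X)$ and the relation \eqref{eq(1)}) to pull the expression back. Writing $y_\lambda = g_\lambda(a_\lambda,b_\lambda)$ and using $g^{-1}(a_\lambda, y_\lambda) \to g^{-1}(a_\lambda, \dots)$ together with the identity \eqref{eq(1)}, I expect to recover $b_\lambda$ as a continuous function of the convergent data, forcing $b_\lambda$ (and similarly, by symmetry of the argument, $a_\lambda$) to converge within the closed set $A$.

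Once convergence $g_\lambda \to g$, $a_\lambda \to a \in A$, $b_\lambda \to b \in A$ is secured along a subnet, continuity of $\alpha$ yields $y = \lim g_\lambda(a_\lambda,b_\lambda) = g(a,b) \in G(A,A)$, so $G(A,A)$ is closed. I expect the delicate step to be the recovery of convergent subnets of $a_\lambda$ and $b_\lambda$ from the compactness of $G$ alone; the cleanest route is likely the graph argument: show the map $(g,a,b) \mapsto (a, g(a,b))$ restricted to $G \times A \times A$ is a \emph{closed} embedding onto its image because $G$ is compact, and then compose with the continuous projection to the last coordinate. I would verify injectivity-type properties using the relations \eqref{eq(1)} and \eqref{eq(2)}, which let me invert the action in the second slot and thereby reconstruct $b$ from $(a, g(a,b))$ and $g$.
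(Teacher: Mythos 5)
Your overall strategy --- take a net $g_\lambda(a_\lambda,b_\lambda)\to y$, use compactness of $G$ to arrange $g_\lambda\to g$, and then try to force $a_\lambda$ and $b_\lambda$ to converge inside $A$ --- breaks down at exactly the point you flag as delicate, and it cannot be repaired along the lines you propose. The identity \eqref{eq(1)} lets you invert the action only in the \emph{second} slot, with the first slot held fixed: $b_\lambda=g_\lambda^{-1}(a_\lambda,y_\lambda)$. This is not ``a continuous function of the convergent data,'' because $a_\lambda$ is itself one of its arguments; and there is no symmetric formula recovering $a_\lambda$ from $g_\lambda$, $b_\lambda$, and $y_\lambda$, since the two arguments of a binary action play entirely different roles. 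Indeed, for the binary action \eqref{eq11} induced by an ordinary action, $g(x,x')=gx'$, the value $g_\lambda(a_\lambda,b_\lambda)$ carries no information about $a_\lambda$ at all, so $a_\lambda$ can be a divergent net in a noncompact closed $A$ while $y_\lambda$ converges; no subnet of $(a_\lambda)$ need converge, and the plan of passing to the limit in all three coordinates fails. Your fallback ``cleanest route'' also fails at its last step: even if the map $(g,a,b)\mapsto(a,g(a,b))$ had closed image in $X\times X$, the projection of $X\times X$ onto the last coordinate is not a closed map unless $X$ itself is compact, which is not assumed.

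The missing idea is that one should not try to exhibit $y$ as a limit of the full data $(g_\lambda,a_\lambda,b_\lambda)$, but rather produce directly a single pair $(g_0,a_0)$ with $a_0\in A$ and $g_0(a_0,y)\in A$; then $y=g_0^{-1}(a_0,g_0(a_0,y))\in G(A,A)$ by \eqref{eq(1)} and \eqref{eq(2)}. This is what the paper does: it fixes $a_0\in A$ with $y\in\overline{G(a_0,A)}$ and argues by contradiction that if $g(a_0,y)\notin A$ for every $g\in G$, then continuity of the action gives, for each $g$, neighborhoods $V_g\ni g$ and $U_{y}(g)\ni y$ with $V_g(a_0,U_{y}(g))\cap A=\emptyset$; a finite subcover of the compact group $G$ yields one neighborhood $U$ of $y$ with $G(a_0,U)\cap A=\emptyset$, hence $U\cap G(a_0,A)=\emptyset$, contradicting $y\in\overline{G(a_0,A)}$. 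Compactness of $G$ enters only through this tube-lemma-style finite subcover over the group, not through extracting convergent subnets of points of $A$.
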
 

\begin{proof}
Let $x_0\in X$ be a point in the closure of $G(A,A)$. Since the set $A\subset X$ is closed and 
$$G(A,A)= \bigcup\limits_{a\in A}G(a,A)$$ 
it follows that, for some point $a_0\in A$, $x_0$ belongs to the closure of $G(a_0,A)$.

Note that, to prove the closedness of $G(A, A)$, is suffices to prove the existence of an element $g_0\in G$
for which $g_0 (a_0, x_0)$ belongs to the closure of $A$ and, therefore, $g_0 (a_0, x_0) \in A$, because $A$ is closed. Indeed, for this element, we have
$$x_0=g_0^{-1}(a_0, g_0 (a_0,x_0))\in G(A,A).$$

Suppose that, on the contrary, $g (a_0, x_0)$ belongs to the closure of $A$ for no $g\in G$, i.e., any point $g\in G$ has a neighborhood $W_{g(a_0,x_0)}$ such that
$$W_{g(a_0,x_0)}\cap A = \emptyset.$$
Since the binary action of the group $G$ on $X$ is continuous, there exist neighborhoods $g\in V_g$ and $x_0\in U_{x_0}(g)$, for which $V_g(a_0,U_{x_0}(g))\subset W_{g(a_0,x_0)}$ and, therefore,
$$V_g(a_0,U_{x_0}(g))\cap A = \emptyset.$$
The family $\{V_g, \ g\in G\}$ is an open cover of the compact group $G$, and it has a finite subcover $V_{g_1}, V_{g_2}, \ldots , V_{g_n}$. We set
$$U_{x_0} = U_{x_0}(g_1)\cap \ldots \cap U_{x_0}(g_n).$$
Note that $G(a_0,U_{x_0})\cap A = \emptyset$, hence $U_{x_0}\cap G(a_0,A) = \emptyset$, i.e., $x_0$ does not belong to the closure of  $G(a_0,A)$, which contradicts the assumption.
\end{proof}

A binary $G$-space $X$ is said to be \emph{distributive} if, for any $x, x', x'' \in X$ and $g, h\in G$, we have %
\begin{equation}\label{eq1-1}
g(h(x,x'), h(x,x''))=h(x,g(x', x'')).
\end{equation}

Let $H$ be a subgroup of the group $G$. The continuous map $\alpha : H\times G^2\to G$ defined by
\begin{equation}\label{eq-H on G}
\alpha(h,x,y) = xhx^{-1}y \quad \text{or} \quad h(x,y) = xhx^{-1}y,
\end{equation}
where $h\in H$ and $x,y\in G$ are any elements, is a distributive binary action of the group $H$ on $G$.

The class of distributive binary $G$-spaces plays an important role in the theory of binary $G$-spaces. 

A detailed exposition of these and other definitions, notions, and results used in the paper without
reference can be found in \cite{Bredon}, \cite{Engelking} and \cite{Gev}.

\section{Orbits And The Orbit Space Of A Distributive Binary G-Space}

Let $X$ be a binary $G$-space. The \emph{orbit} of a point $x\in X$ is defined as the minimal bi-invariant subset $[x]$ of $X$ containing the point $x$. Obviously, $x\in G(x,x)\subset [x]$ for all $x \in X$. Therefore, if $G(x, x)$ is a bi-invariant set, then $G(x, x) = [x]$.

As is known \cite[Example 2]{Gev-Naz}, sets of the form $G(x, x)$ are not generally bi-invariant. However, the following assertion is valid \cite{Gev2}.

\begin{proposition}\label{th-distr}
Let $X$ be a distributive binary $G$-space. Then, for any  $x\in X$, the set $G(x,x)$ is bi-invariant.
\end{proposition}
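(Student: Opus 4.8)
The plan is to prove the two set inclusions that together give $G(G(x,x),G(x,x)) = G(x,x)$, which is precisely bi-invariance of $G(x,x)$. The inclusion $G(x,x) \subseteq G(G(x,x),G(x,x))$ is immediate: by axiom \eqref{eq(2)} we have $x = e(x,x) \in G(x,x)$, so $\{x\} \subseteq G(x,x)$, and hence $G(x,x) = G(\{x\},\{x\}) \subseteq G(G(x,x),G(x,x))$.

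The substance lies in the reverse inclusion. A typical element of $G(G(x,x),G(x,x))$ has the form $g\big(h(x,x),\,k(x,x)\big)$ for some $g,h,k \in G$, and I must exhibit a single element $m \in G$ with $g\big(h(x,x),k(x,x)\big) = m(x,x)$. The main obstacle is that the distributivity relation \eqref{eq1-1} requires the \emph{same} group element in both inner slots, namely $g\big(h(x,x'),h(x,x'')\big) = h\big(x,g(x',x'')\big)$, whereas here the two inner arguments $h(x,x)$ and $k(x,x)$ carry different elements $h$ and $k$.

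To overcome this, I would first rewrite the second argument so that it, too, has $h$ in the outer position. Using axiom \eqref{eq(1)} with the factorization $k = h\,(h^{-1}k)$ gives $k(x,x) = h\big(x,(h^{-1}k)(x,x)\big)$. Now both inner arguments are of the form $h(x,\cdot)$: the first is $h(x,x)$ and the second is $h\big(x,(h^{-1}k)(x,x)\big)$. Applying \eqref{eq1-1} with $x' = x$ and $x'' = (h^{-1}k)(x,x)$ yields $g\big(h(x,x),k(x,x)\big) = h\big(x,\,g(x,x'')\big)$.

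It then remains to collapse the right-hand side to a single orbit element, which two further applications of axiom \eqref{eq(1)} accomplish: first $g(x,x'') = g\big(x,(h^{-1}k)(x,x)\big) = (gh^{-1}k)(x,x)$, and then $h\big(x,(gh^{-1}k)(x,x)\big) = (hgh^{-1}k)(x,x)$. Thus $g\big(h(x,x),k(x,x)\big) = (hgh^{-1}k)(x,x) \in G(x,x)$, so $m = hgh^{-1}k$ works and the reverse inclusion follows. I expect the bookkeeping of these index manipulations — tracking which axiom turns a nested expression into a single group element — to be the only delicate point; the conceptual crux is the reduction to a common outer element $h$ via \eqref{eq(1)} before distributivity can be invoked.
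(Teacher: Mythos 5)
Your proposal is correct and follows essentially the same route as the paper: both reduce $k(x,x)$ to $h\bigl(x,(h^{-1}k)(x,x)\bigr)$ via axiom \eqref{eq(1)} so that distributivity \eqref{eq1-1} applies, and both arrive at the same element $hgh^{-1}k(x,x)$. The only difference is that you also spell out the trivial inclusion $G(x,x)\subseteq G(G(x,x),G(x,x))$, which the paper leaves implicit.
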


\begin{proof}
We must prove that $g(h(x,x), k(x,x))\in G(x,x)$ for any $h(x,x), k(x,x)\in G(x,x)$ and any $g\in G$. Indeed, by virtue of \eqref{eq1-1}, we have
\begin{multline*}
g(h(x,x), k(x,x))=g(h(x,x), h(x,h^{-1}k(x,x)))=h(x, g(x, h^{-1}k(x,x)))= \\
= h(x, gh^{-1}k(x, x))=hgh^{-1}k(x, x)\in G(x,x). 
\end{multline*}
\end{proof}

Thus, in a distributive binary $G$-space $X$, the set $G(x, x)$ is the orbit of $x\in X$.

An example constructed in \cite{Gev-Naz} shows that orbits in a binary $G$-space may intersect, i.e., the orbit of a point $x$ in a binary $G$-space $X$ is not necessarily the orbit of each of its points. It may contain smaller orbits. Therefore, we cannot define the notion of orbit space for an arbitrary binary $G$-space. However, as shown by Proposition below, the situation in distributive binary $G$-spaces is different.

\begin{proposition}\label{prop-5}
Any two orbits in a distributive binary $G$-space $X$ either are disjoint or coincide.
\end{proposition}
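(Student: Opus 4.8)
The plan is to show that the relation "$y$ lies in the orbit of $x$" is an equivalence relation, using the fact established in Proposition \ref{th-distr} that orbits have the clean form $[x]=G(x,x)$. Two orbits either coincide or are disjoint precisely when this relation is symmetric and transitive (reflexivity is immediate from $x\in G(x,x)$). So the real content is to prove: if $G(x,x)\cap G(y,y)\neq\emptyset$, then $G(x,x)=G(y,y)$.

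First I would suppose $z\in G(x,x)\cap G(y,y)$, so $z=g(x,x)=h(y,y)$ for some $g,h\in G$. I would then aim to show that each of $x$ and $y$ lies in the other's orbit, which by minimality of orbits (or by the bi-invariance guaranteed by Proposition \ref{th-distr}) forces the two orbits to agree. The key subresult I expect to need is that whenever $z\in G(x,x)$, in fact $G(z,z)=G(x,x)$; that is, every point of an orbit regenerates the same orbit. This is exactly the distributive-world analogue of the standard $G$-space fact, and it is where the distributivity hypothesis \eqref{eq1-1} should do its work. Concretely, writing $z=g(x,x)$, I would compute $k(z,z)=k(g(x,x),g(x,x))$ and massage it, using \eqref{eq1-1} together with the homomorphism identity \eqref{eq(1)}, into the form $g\,k'(x,x)$ for a suitable $k'\in G$, thereby showing $G(z,z)\subseteq G(x,x)$; applying the same with $g^{-1}$ and $x=g^{-1}(x,z)$-type manipulations should give the reverse inclusion.

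Once the regeneration lemma $G(z,z)=G(x,x)$ (for $z\in G(x,x)$) is in hand, the proposition follows quickly: from $z\in G(x,x)$ we get $G(z,z)=G(x,x)$, and from $z\in G(y,y)$ we get $G(z,z)=G(y,y)$, whence $G(x,x)=G(y,y)$.

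The main obstacle I anticipate is the regeneration step, specifically the algebra of rewriting $k(g(x,x),g(x,x))$ so that the outer $g$ factors out. The computation in Proposition \ref{th-distr} already shows the pattern: a product of the form $g(h(x,x),k(x,x))$ collapses to $hgh^{-1}k(x,x)$. The subtlety is that distributivity \eqref{eq1-1} is stated for expressions $h(x,x'),h(x,x'')$ sharing the same first slot $x$, so I must first bring both arguments of $k$ into that shape (here both arguments are literally $g(x,x)$, which is the favorable case with $x'=x''=x$), and then track the group element that emerges. I would expect the identity to reduce, via \eqref{eq1-1} and then \eqref{eq(1)}, to $k(g(x,x),g(x,x))=g(x,k(x,x))=gk(x,x)\in G(x,x)$, giving one inclusion cleanly; the reverse inclusion is then symmetric since $x\in G(z,z)$ can be exhibited by inverting $g$. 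Verifying that these manipulations are valid for the general pair rather than just the diagonal case $x'=x''$ is the point that requires care.
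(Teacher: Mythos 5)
Your proposal is correct and follows essentially the same route as the paper: the heart of both arguments is the ``regeneration'' identity $k(g(x,x),g(x,x))=g(x,k(x,x))=gk(x,x)$ obtained from distributivity \eqref{eq1-1} together with \eqref{eq(1)}, which shows that every point of an orbit $G(x,x)$ generates that same orbit, after which two intersecting orbits must both equal the orbit of a common point. (The paper gets one of the two inclusions from the minimality of the orbit rather than by inverting $g$, but this is an inessential difference, and your worry about the non-diagonal case of \eqref{eq1-1} is moot since only $x'=x''=x$ is ever needed.)
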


\begin{proof}
Let $x$ be any element of a distributive binary $G$-space $X$. By Proposition \ref{th-distr}, the set $G(x,x)$ is
bi-invariant; therefore, this set is the orbit of the point $x\in X$: $[x]=G(x,x)$.

Let us prove that the set $[x]$ is the orbit of each of its points, i.e., does not contain bi-invariant proper subsets. Take any element $g_0(x,x)\in [x]$. Let us show that $[g_0(x,x)]=[x]$. Obviously, we have $[g_0(x,x)]\subset [x]$. Therefore, it suffices to prove that $[x] \subset [g_0(x,x)]$, i.e., any element $g(x,x)$ of the orbit $[x]$ is also an element of the orbit $[g_0(x,x)]$: 
$$g(x,x) = g'(g_0(x,x), g_0(x,x))$$ 
for some $g'\in G$. Indeed, in view of the distributivity of the binary action, we have
\begin{multline*} 
g(x,x)= g_0g_0^{-1}g(x,x) = g_0(x, g_0^{-1}g(x,x))  =  \\
=g_0^{-1}g(g_0(x,x), g_0(x,x)) = g'(g_0(x,x), g_0(x,x)),
\end{multline*}
where $g'=g_0^{-1}g$.

Now let $[x]$ and $[x']$ be any orbits of a distributive binary $G$-space $X$. Suppose that these orbits intersect, i.e., there exists a point $\widetilde{x}\in [x]\cap [x']$. Then, as proved above, we have $[x]=[\widetilde{x}]$ and $[x']=[\widetilde{x}]$. Therefore, the orbits $[x]$ and $[x']$ coincide.
\end{proof} 

Thus, any distributive binary $G$-space $X$ is partitioned into disjoint classes of subsets, namely, into the orbits of points. We denote the corresponding quotient set by $X|G$. Let $\pi=\pi_X:X\to X|G$ be the natural quotient map, which takes each point $x\in X$ to its orbit $[x]$. We set $x^*=\pi([x])$. Endowing the set $X|G$ with the quotient topology, we obtain a space called the \emph{orbit space} of the distributive binary $G$-space $X$.

If $X$ and $Y$ are distributive binary $G$-spaces, then any biequivariant map $f:X\to Y$ induces the map $\pi(f)=f^*:X|G \to Y|G$ of the orbit spaces defined by $f^*(x^*)=(f(x))^*$. This map is well defined, because is does not depend on the choice of an orbit representative:
$f^*((g(x,x))^*) = (f(g(x,x)))^*=(g(f(x),f(x)))^*=(f(x))^*=f^*(x^*)$ for any $g\in G$.

It is easy to see that $\pi(1_X)=1_{X|G}$ and $\pi(gf)=g^*f^*$, where $f:X\to Y$ and $g:Y\to Z$ are biequivariant maps of distributive binary $G$-spaces. Therefore, the formulas $\pi(X)=X|G$ and $\pi(f)=f^*$ define a covariant functor from the category of distributive binary $G$-spaces and biequivariant maps to the category of topological spaces and continuous maps.

\begin{theorem}\label{prop-1}
Let $X$ be a distributive binary $G$-space, and let $K$ be a compact subset of the group $G$. Then, for any closed subset $A$ of $X$, the set $ KA = \{g(a,a); g\in K, a\in A\}$ is closed in $X$.
\end{theorem}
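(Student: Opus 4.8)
The plan is to prove closedness by a net-convergence argument, modeled on the classical fact that the product $AK$ of a closed set $A$ and a compact set $K$ is closed in a topological group. First I would take a point $x_0$ in the closure of $KA$ and choose a net $(g_\lambda,a_\lambda)\in K\times A$ with $g_\lambda(a_\lambda,a_\lambda)\to x_0$; since closure is detected by nets in an arbitrary topological space and $X$ is Hausdorff (so limits are unique), this loses nothing. Writing $y_\lambda:=g_\lambda(a_\lambda,a_\lambda)$ and using compactness of $K$, I may pass to a subnet for which $g_\lambda\to g_0$ with $g_0\in K$ (here $K$ is compact, hence closed).

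The heart of the argument is to recover the points $a_\lambda$ from the data $g_\lambda$ and $y_\lambda$ without any residual self-reference, which is the binary analogue of writing $a=(ak)k^{-1}$ in a group. The key observation is that distributivity \eqref{eq1-1}, together with the invertibility of the operations $\alpha_g$, yields the identity
\begin{equation*}
g^{-1}\bigl(g(a,a),\,g(a,a)\bigr)=a \qquad (g\in G,\ a\in X).
\end{equation*}
Indeed, applying \eqref{eq1-1} with outer operation $g^{-1}$, inner operation $g$, and $x=x'=x''=a$ gives $g^{-1}(g(a,a),g(a,a))=g(a,g^{-1}(a,a))$, and the monoid relation $g\ast g^{-1}=e$ collapses the right-hand side to $a$. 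Distributivity is essential here: without it the relation $g^{-1}\ast g=e$ only yields $g^{-1}(a,g(a,a))=a$, whose first argument is the wrong point $a$ rather than the convergent quantity $y_\lambda$.

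With this identity in hand the proof finishes quickly. Since inversion in $G$ and the action $\alpha$ are continuous, the map $(g,y)\mapsto \alpha(g^{-1},y,y)=g^{-1}(y,y)$ is continuous, so
\begin{equation*}
a_\lambda=g_\lambda^{-1}(y_\lambda,y_\lambda)\;\longrightarrow\; g_0^{-1}(x_0,x_0)=:a_0,
\end{equation*}
and closedness of $A$ gives $a_0\in A$. Continuity of $\alpha$ then forces $g_0(a_0,a_0)=\lim_\lambda g_\lambda(a_\lambda,a_\lambda)=x_0$, so $x_0=g_0(a_0,a_0)\in KA$ with $g_0\in K$ and $a_0\in A$, as required.

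I expect the main obstacle to be exactly this recovery step, because the expression $g(a,a)$ places the same point in both slots, so the naive inversion $a=g^{-1}(a,g(a,a))$ is circular and useless for controlling a net whose $a_\lambda$ may escape to infinity (note that $A$ is not assumed compact). The displayed distributivity identity is what breaks the circularity; the remainder is routine continuity and compactness bookkeeping. As a sanity check, in the model example $h(x,y)=xhx^{-1}y$ on $G$ one has $g(a,a)=ag$, so $KA$ reduces to the product set $AK$, the statement becomes the classical closedness of $AK$, and the identity specializes to $(ag)g^{-1}=a$.
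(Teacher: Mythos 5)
Your proof is correct, and it takes a genuinely different route from the paper's. The paper argues by separation: for a point $x_0\notin KA$ it invokes Proposition \ref{prop-4} only to conclude that each slice $gA=\Delta_g(A)$ is closed, then uses continuity of the action at $e(x_0,x_0)=x_0$ to produce, for each $g\in K$, neighborhoods $V_g\ni e$ and $U_g\ni x_0$ with $U_g\cap V_g^{-1}gA=\emptyset$, and finally extracts a finite subcover $\{V_{g_i}^{-1}g_i\}$ of $K$ so that $U=\bigcap U_{g_i}$ misses $KA$. You instead run the classical net argument for closedness of $AK$ in a topological group, and the pivot is the explicit inversion identity $g^{-1}\bigl(g(a,a),g(a,a)\bigr)=a$ — which is precisely the computation by which the paper shows $\Delta_{g^{-1}}\circ\Delta_g=\mathrm{id}$ in Proposition \ref{prop-4}, so the two proofs share their one essential algebraic ingredient but deploy it differently: the paper uses it only to get closedness of $gA$, while you use it to recover $a_\lambda$ as a continuous function of the convergent data $(g_\lambda,y_\lambda)$. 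Your version is shorter and makes transparent why distributivity (and not mere invertibility of $\alpha_g$) is needed — your remark that $g^{-1}(a,g(a,a))=a$ has the useless point $a$ in the first slot is exactly the right diagnosis; the trade-off is that it relies on the net characterizations of closure and compactness, whereas the paper's finite-subcover argument is purely open-set bookkeeping. All the small steps check out: the subnet of $(g_\lambda)$ converges in $K$ by compactness, the restricted net $y_\lambda$ still converges to $x_0$, Hausdorffness of $X$ gives uniqueness of the limit in the final identification $g_0(a_0,a_0)=x_0$, and your sanity check against the model action $h(x,y)=xhx^{-1}y$ is accurate.
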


The proof of this theorem is based on the following proposition.

\begin{proposition}\label{prop-4}
Let $X$ be a distributive binary $G$-space. Then, for any element $g\in G$, the map $\Delta_g:X\to X$ defined by
\begin{equation}\label{eq-1}
\Delta_g(x) = g(x,x), \quad x\in X,
\end{equation}
is a homeomorphism.
\end{proposition}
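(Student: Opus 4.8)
The plan is to exhibit $\Delta_g$ as a continuous bijection whose inverse has the same form, namely $\Delta_{g^{-1}}$, and is therefore continuous as well. First I would record continuity: the map $\Delta_g$ factors as $\Delta_g=\alpha_g\circ d$, where $d:X\to X^2$ is the diagonal embedding $d(x)=(x,x)$ and $\alpha_g:X^2\to X$ is the continuous binary operation attached to $g$. Since both factors are continuous, so is $\Delta_g$, and the same remark applies verbatim to $\Delta_{g^{-1}}$.

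The heart of the argument is the verification that $\Delta_{g^{-1}}\circ\Delta_g=\mathrm{id}_X$ and $\Delta_g\circ\Delta_{g^{-1}}=\mathrm{id}_X$. For the first composite I compute
$$\Delta_{g^{-1}}(\Delta_g(x))=g^{-1}(g(x,x),g(x,x)).$$
The decisive observation is that both arguments $g(x,x)$ have the form $g(x,-)$ with the \emph{same} first coordinate $x$, so the distributivity law \eqref{eq1-1} applies with $h=g$ and gives
$$g^{-1}(g(x,x),g(x,x))=g(x,g^{-1}(x,x)).$$
Axiom \eqref{eq(1)} then yields $g(x,g^{-1}(x,x))=(gg^{-1})(x,x)=e(x,x)$, and \eqref{eq(2)} gives $e(x,x)=x$. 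Hence $\Delta_{g^{-1}}\circ\Delta_g=\mathrm{id}_X$.

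The reverse composite is handled symmetrically. Here $\Delta_g(\Delta_{g^{-1}}(x))=g(g^{-1}(x,x),g^{-1}(x,x))$, and applying \eqref{eq1-1} with the roles of $g$ and $g^{-1}$ interchanged produces $g^{-1}(x,g(x,x))$, which equals $(g^{-1}g)(x,x)=e(x,x)=x$ by \eqref{eq(1)} and \eqref{eq(2)}. Together the two computations show that $\Delta_g$ is a bijection with inverse $\Delta_{g^{-1}}$, and since the latter is continuous, $\Delta_g$ is a homeomorphism.

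I expect the only genuine obstacle to be the correct bookkeeping in the application of distributivity: the law \eqref{eq1-1} is usable precisely because the two copies of $g(x,x)$ share the first coordinate $x$, so that they appear as $g(x,x')$ and $g(x,x'')$ with $x'=x''=x$. Once this matching is recognized, the group-action axioms \eqref{eq(1)} and \eqref{eq(2)} close both computations with no further effort, and no compactness, local connectedness, or Theorem~\ref{th011} is needed.
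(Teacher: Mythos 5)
Your proof is correct and follows essentially the same route as the paper: the key step in both is the distributivity computation $g^{-1}(g(x,x),g(x,x))=g(x,g^{-1}(x,x))=e(x,x)=x$ showing that $\Delta_{g^{-1}}$ is a two-sided continuous inverse of $\Delta_g$. The only difference is that the paper also verifies injectivity as a separate preliminary step, which your argument correctly renders unnecessary.
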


\begin{proof}
The continuity of $\Delta_g$ is obvious.

Let $\Delta_g(x)=\Delta_g(x')$, $x,x'\in X$. Then
\[
g^{-1}(\Delta_g(x),\Delta_g(x)) = g^{-1}(\Delta_g(x'),\Delta_g(x')).
\]
For the left-hand side of this relation, the definition \eqref{eq-1} and the distributivity of the binary action of $G$ on $X$ imply
\begin{equation*}
g^{-1}(\Delta_g(x),\Delta_g(x)) = g^{-1}(g(x,x),g(x,x)) = g(x,g^{-1}(x,x))=e(x,x) = x.
\end{equation*}
Similarly,
\begin{equation*}
g^{-1}(\Delta_g(x'),\Delta_g(x')) = g^{-1}(g(x',x'),g(x',x')) = g(x',g^{-1}(x',x'))= e(x',x') = x'.
\end{equation*}
Thus, $x=x'$, and hence the map $\Delta_g$ is injective.

Now note that the map $\Delta_{g^{-1}}:X\to X$ is inverse to $\Delta_g$. Indeed, in view of distributivity, we have %
\[
\Delta_g\Delta_{g^{-1}}(x)=  \Delta_g(g^{-1}(x,x)) = g(g^{-1}(x,x),g^{-1}(x,x)) = g^{-1}(x, g(x,x)) = e(x,x) = x
\]
for any $x\in X$. Similarly, $\Delta_{g^{-1}}\Delta_g(x)=x$.
\end{proof}

\begin{proof}[Proof of Theorem \ref{prop-1}]
Take $x_0\in X \backslash KA$ and consider any element $g\in K$. By Proposition \ref{prop-4} the set $\Delta_g(A)=gA=\{g(a,a); a\in A\}$ is closed in $X$. Since $gA\subset KA$, it follows that $x_0$ belongs to the open set $X\backslash gA$. Therefore, the relation $e(x_0,x_0)=x_0$ and the continuity of the binary action imply the existence of neighborhoods $V_g$ of $e$ and $Ug$ of $x_0$ such that
$$V_gU_g=\{h(x,x); h\in V_g, x\in U_g\}\subset X\backslash gA,$$ 
i.e., 
$$V_gU_g\cap gA = \emptyset.$$ 
Therefore,
\[
U_g\cap V_g^{-1}gA  = \emptyset.
\]

The family $\{V_g^{-1}g; g\in K\}$ is an open cover of the compact set $K$. Let $V_{g_1}^{-1}g_1$, $V_{g_2}^{-1}g_2$, \ldots, $V_{g_n}^{-1}g_n$ be its finite subcover: 
\[
K\subset V_{g_1}^{-1}g_1\cup V_{g_2}^{-1}g_2\cup \ldots \cup V_{g_n}^{-1}g_n.
\]

We set
\[
U=\bigcap_{i=1}^n U_{g_i}.
\]
This is an open neighborhood of $x_0$, and
\begin{equation}\label{eq-5}
U\cap V_{g_i}^{-1}g_iA  = \emptyset
\end{equation}
for all $i=1, \ldots, n$.

Let us prove that either $U\subset X\backslash KA$ or $U\cap KA = \emptyset$. Indeed, for any element $g(a,a)\in KA$, there exists a number $i$, $1\leqslant i\leqslant n$, such that $g\in V_{g_i}^{-1}g_i$, and hence $g(a,a)\notin U$ by virtue of \eqref{eq-5}.
\end{proof}

Theorem \ref{prop-1} directly implies the following result.

\begin{corollary}\label{cor-1} 
Let $G$ be a compact topological group, and let $X$ be a distributive binary $G$-space. Then, for any closed subset $A$ of $X$, the union 
$$GA = \bigcup\limits_{a\in A}G(a,a)$$ 
of the orbits of points in $A$ is closed in $X$.
\end{corollary}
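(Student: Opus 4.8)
The plan is to recognize this corollary as nothing more than the special case $K = G$ of Theorem \ref{prop-1}. Since the group $G$ is assumed compact, the whole of $G$ is itself a compact subset of $G$, and so it is a legitimate choice for the compact set $K$ appearing in that theorem.

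First I would unwind the definitions to confirm that the two descriptions of $GA$ coincide. By Proposition \ref{th-distr}, in a distributive binary $G$-space the orbit of a point $a \in A$ is exactly $G(a,a)$, so the union of the orbits over $A$ is
\[
GA = \bigcup_{a\in A} G(a,a) = \{g(a,a); \ g\in G, \ a\in A\}.
\]
The right-hand side is precisely the set $KA$ of Theorem \ref{prop-1} in the case $K = G$. With this identification in hand, I would then simply invoke Theorem \ref{prop-1} with the compact set $K = G$ and the given closed set $A$ to conclude that $GA$ is closed in $X$.

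The main obstacle is essentially nonexistent: all of the substantive work---the construction of the neighborhoods $V_g$ and $U_g$, the extraction of a finite subcover using compactness, and the use of distributivity through Proposition \ref{prop-4} to see that each $gA$ is closed---has already been carried out in the proof of Theorem \ref{prop-1}. The only point demanding any care is the purely notational check that the union-of-orbits description of $GA$ agrees with the set-builder description $KA$, and this follows immediately from the defining formula for $G(a,a)$ together with Proposition \ref{th-distr}. In other words, the corollary is a direct specialization, and the compactness of $G$ is exactly what licenses taking $K$ to be the entire group.
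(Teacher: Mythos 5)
Your proposal is correct and coincides with the paper's own treatment: the paper states that Theorem \ref{prop-1} "directly implies" the corollary, i.e.\ it is exactly the specialization $K=G$, legitimate because $G$ is compact. The identification of $\bigcup_{a\in A}G(a,a)$ with the set $\{g(a,a);\ g\in G,\ a\in A\}$ is the only (trivial) check needed, and you have made it.
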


\begin{theorem}\label{th-2}
Let $G$ be a compact topological group, and let $X$ be a distributive binary $G$-space. Then the orbit projection $\pi:X\to X|G$ is

(a) a closed map;

(b) a proper map.
\end{theorem}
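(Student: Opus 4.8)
The plan is to deduce both assertions from Corollary \ref{cor-1} together with the elementary behavior of quotient maps. The starting observation, which I would record first, is that for an \emph{arbitrary} subset $A\subset X$ the full preimage of its image under $\pi$ is exactly the union of the orbits meeting $A$:
$$\pi^{-1}(\pi(A)) = \bigcup_{a\in A}[a] = \bigcup_{a\in A}G(a,a) = GA,$$
where the middle equalities use that in a distributive binary $G$-space $[a]=G(a,a)$ (Proposition \ref{th-distr}) and that the orbits form a partition of $X$ (Proposition \ref{prop-5}). Hence $GA$ is precisely the $\pi$-saturation of $A$, and this identifies the set from Corollary \ref{cor-1} with $\pi^{-1}(\pi(A))$.

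For part (a) this reduction finishes the argument almost immediately. Since $X|G$ carries the quotient topology, a subset $C\subset X|G$ is closed if and only if $\pi^{-1}(C)$ is closed in $X$. Applying this to $C=\pi(A)$ for a closed set $A$, I would note that $\pi^{-1}(\pi(A))=GA$ is closed in $X$ by Corollary \ref{cor-1} (this is where compactness of $G$ enters). Therefore $\pi(A)$ is closed in $X|G$, and so $\pi$ is a closed map.

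For part (b) I would first check that the fibers of $\pi$ are compact: the fiber over $x^*$ is $[x]=G(x,x)$, which is the image of the compact group $G$ under the continuous map $g\mapsto \alpha(g,x,x)=g(x,x)$, hence compact. Together with surjectivity and part (a), this makes $\pi$ a perfect (closed, surjective, compact-fibered) map, and the conclusion that preimages of compact sets are compact is then the standard property of such maps. If a self-contained argument is wanted, I would run the tube-lemma argument directly: given a compact $K\subset X|G$ and an open cover $\{U_\lambda\}$ of $\pi^{-1}(K)$, cover each compact fiber $\pi^{-1}(y)$ by finitely many $U_\lambda$ with union $U_y$, use closedness of $\pi$ to produce an open $V_y\ni y$ with $\pi^{-1}(V_y)\subset U_y$, extract a finite subcover $V_{y_1},\dots,V_{y_m}$ of $K$, and conclude that $\pi^{-1}(K)\subset\bigcup_j U_{y_j}$ is covered by finitely many $U_\lambda$. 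The main obstacle is not in these bookkeeping steps but lies upstream in Corollary \ref{cor-1} (via Theorem \ref{prop-1}), on which the closedness in part (a)—and therefore the perfectness used in part (b)—entirely depends; the one point needing care here is verifying the passage from "closed with compact fibers" to "proper," which is exactly the tube-lemma step above.
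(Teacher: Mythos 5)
Your proposal is correct and follows essentially the same route as the paper: part (a) is reduced, via the saturation identity $\pi^{-1}(\pi(A))=GA$ and the quotient topology, to Corollary \ref{cor-1}, and part (b) follows from closedness plus compactness of the fibers $G(x,x)$ by the standard perfect-map criterion. The only difference is that you spell out the fiber compactness and the tube-lemma step, which the paper leaves as "as is known."
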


\begin{proof}
To prove (a), consider any closed subset $A$ of $X$. By the definition of the topology of $X|G$, the closedness of $\pi(A)\subset X|G$ is equivalent to that of $\pi^{-1}\pi(A)\subset X$. The set
$$\pi^{-1}\pi(A)= \bigcup\limits_{a\in A}G(a,a)$$ 
is the union of orbits of points in the closed set $A$, and hence it is closed by Corollary \ref{cor-1}.

As is known, a closed map is proper if the preimages of all points are compact. It follows that the map $\pi:X\to X|G$ is proper, because it is closed and the preimages of all points of the orbit space $X|G$ are compact.
\end{proof}

\begin{theorem}\label{th-1}
Let $G$ be a compact topological group, and let $X$ be a distributive binary $G$-space. Then the orbit space  $X|G$ is Hausdorff.
\end{theorem}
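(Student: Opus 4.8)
The plan is to reduce the problem to separating two disjoint fibres of $\pi$ and then to transport that separation down to the quotient using the fact, already available from Theorem \ref{th-2}(a), that $\pi$ is a closed map. So the binary structure will be used only to guarantee that the orbits are the fibres of $\pi$ and that they are compact; everything else is point-set topology built on the closedness of $\pi$.

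First I would take two distinct points $x^*, y^* \in X|G$, so that by Proposition \ref{prop-5} the corresponding orbits $[x]=G(x,x)$ and $[y]=G(y,y)$ are disjoint; these are precisely the fibres $\pi^{-1}(x^*)$ and $\pi^{-1}(y^*)$. The first observation is that each orbit is compact: the map $G\to X$, $g\mapsto g(x,x)=\alpha(g,x,x)$, is continuous with image $[x]$, so $[x]$ is the continuous image of the compact group $G$, and likewise for $[y]$. Since $X$ is Hausdorff and $[x]\cap[y]=\emptyset$, the two disjoint compact sets can be separated by disjoint open sets $U,V\subset X$ with $[x]\subset U$, $[y]\subset V$, and $U\cap V=\emptyset$.

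Next I would saturate, and this is where the closedness of $\pi$ enters. Since $X\setminus U$ is closed, the set $\pi(X\setminus U)$ is closed in $X|G$ by Theorem \ref{th-2}(a); and since $\pi^{-1}(x^*)=[x]\subset U$, the point $x^*$ does not belong to $\pi(X\setminus U)$. Hence $W_1=(X|G)\setminus\pi(X\setminus U)$ is an open neighborhood of $x^*$ satisfying $\pi^{-1}(W_1)\subset U$: indeed, if $\pi(z)\in W_1$ then $z\notin X\setminus U$, since otherwise $\pi(z)\in\pi(X\setminus U)$. Defining $W_2=(X|G)\setminus\pi(X\setminus V)$ in the same way gives an open neighborhood of $y^*$ with $\pi^{-1}(W_2)\subset V$.

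Finally I would conclude: from $U\cap V=\emptyset$ we obtain $\pi^{-1}(W_1)\cap\pi^{-1}(W_2)=\emptyset$, and since $\pi$ is surjective with nonempty fibres this forces $W_1\cap W_2=\emptyset$. Thus $x^*$ and $y^*$ are separated by disjoint open sets, so $X|G$ is Hausdorff. The two essential inputs are both consequences of the compactness of $G$: the compactness of the orbits, which permits the separation upstairs in $X$, and the closedness of $\pi$ (via Corollary \ref{cor-1}), which lets that separation descend. I expect the only real subtlety to be this descent step — checking that the saturated complements $W_1,W_2$ are genuine open neighborhoods whose preimages remain inside $U$ and $V$ — rather than any further manipulation of the binary action.
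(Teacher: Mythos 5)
Your proof is correct and follows the same overall strategy as the paper: the fibres $\pi^{-1}(x^*)$ and $\pi^{-1}(y^*)$ are disjoint compact subsets of the Hausdorff space $X$ (compact as continuous images of $G$), hence can be separated by disjoint open sets, and the separation is pushed down to $X|G$ using the closedness of $\pi$ from Theorem \ref{th-2}. The only point of divergence is the descent step. The paper takes a neighborhood $U$ of $x$ with $\overline{U}\cap G(x',x')=\emptyset$ and uses the pair $\pi(U)$ and $X|G\setminus\pi(\overline{U})$; this tacitly requires that $\pi(U)$ be open, i.e.\ that $\pi$ is an open map (which does hold here, since $\pi^{-1}(\pi(U))=\bigcup_{g\in G}\Delta_g(U)$ is open by Proposition \ref{prop-4}, but is not verified in the paper). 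Your symmetric construction $W_1=(X|G)\setminus\pi(X\setminus U)$, $W_2=(X|G)\setminus\pi(X\setminus V)$ uses only the closedness of $\pi$ -- it is the standard lemma that a closed continuous surjection whose fibres can be separated has Hausdorff image -- and is therefore marginally more self-contained, though both arguments rest on exactly the same two consequences of the compactness of $G$: compactness of the orbits and closedness of $\pi$.
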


\begin{proof}
Let $x^*$ and $x'^*$ be any two different orbits in the space $X|G$: $x^*\neq x'^*$. The sets 
$$\pi^{-1}(x^*)=G(x,x), \quad \pi^{-1}(x'^*)=G(x',x')$$
are disjoint compact subsets of the Hausdorff space $X$; therefore, they have disjoint open neighborhoods. In particular, there exists a neighborhood $U$ of $x$ for which
$$\overline{U}\cap G(x',x') = \emptyset .$$
Hence $x'^*\notin \pi (\overline{U})$, and $\pi(\overline{U})$ is a closed subset of $X|G$ by Theorem \ref{th-2}. Thus, the open sets $\pi(U)$ and
$X|G \backslash \pi(\overline{U})$ are disjoint neighborhoods of $x^*$ and $x'^*$, respectively.
\end{proof}

\begin{theorem}\label{th-3}
Let $G$ be a compact topological group, and let $X$ be a distributive binary $G$-space. Then the orbit space $X|G$ is compact (locally compact) if and only if so is the space $X$.
\end{theorem}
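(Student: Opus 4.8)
The plan is to exploit the two properties of the orbit projection $\pi:X\to X|G$ established in Theorem \ref{th-2}: that it is a continuous closed surjection, and that it is proper, so that preimages of compact sets are compact. I would treat the compact case and the locally compact case separately, and in each case prove the two implications independently, since the ``only if'' direction will lean on properness while the ``if'' direction will lean on closedness.

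For the compact case one direction is immediate: if $X$ is compact, then $X|G=\pi(X)$ is the continuous image of a compact space, hence compact. Conversely, if $X|G$ is compact, then since $\pi$ is proper and $X=\pi^{-1}(X|G)$, the space $X$ is the preimage of a compact set under a proper map, and is therefore compact.

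The locally compact case requires a little more care. Suppose first that $X$ is locally compact. I would fix a point $x^*\in X|G$ and observe that its fiber $F=\pi^{-1}(x^*)=G(x,x)$ is compact, being the continuous image of $G$ under $g\mapsto g(x,x)$. Since $X$ is locally compact and Hausdorff, the compact set $F$ admits an open neighborhood $U$ with $\overline{U}$ compact. Because $\pi$ is closed, the set $W=(X|G)\setminus\pi(X\setminus U)$ is open, contains $x^*$ (as $F\subset U$ forces $x^*\notin\pi(X\setminus U)$), and satisfies $\pi^{-1}(W)\subset U$; by surjectivity of $\pi$ one then gets $W=\pi(\pi^{-1}(W))\subset\pi(\overline{U})$. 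The set $\pi(\overline{U})$ is compact as the continuous image of a compact set, and it is a neighborhood of $x^*$ because it contains the open set $W\ni x^*$. Hence $X|G$ is locally compact. Conversely, if $X|G$ is locally compact, I would take any $x\in X$, put $x^*=\pi(x)$, and choose a compact neighborhood $N$ of $x^*$. Then $\pi^{-1}(N)$ is a neighborhood of $x$, and it is compact because $\pi$ is proper; thus $X$ is locally compact.

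The step I expect to be the crux is the passage from a compact neighborhood of the fiber upstairs in $X$ to a compact neighborhood of $x^*$ downstairs in $X|G$: the neighborhood $U$ chosen in $X$ need not be saturated, and it is precisely the closedness of $\pi$ that lets one replace it by the saturated open set $W$ whose image under $\pi$ furnishes the required compact neighborhood of $x^*$. Once this tube-type argument is in place, everything else reduces to the formal properties of continuous, closed, proper surjections.
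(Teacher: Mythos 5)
Your proof is correct, and the compact case together with the converse half of the locally compact case coincide with the paper's argument (continuous image of a compact space; preimage of a compact set under a proper map; preimage of a compact neighborhood downstairs). Where you genuinely diverge is the forward locally compact direction. The paper simply takes an open neighborhood $U$ of the point $x$ with $\overline{U}$ compact and asserts that $\pi(U)$ is an open neighborhood of $x^*$ with compact closure $\pi(\overline{U})$; this tacitly uses that $\pi$ is an \emph{open} map, a fact not stated or proved in the paper (it does hold, since $\pi^{-1}(\pi(U))=\bigcup_{g\in G}\Delta_g(U)$ is open by Proposition \ref{prop-4}, but the reader must supply this). You instead take a relatively compact open neighborhood $U$ of the entire fiber $F=\pi^{-1}(x^*)$ and run the standard tube argument for closed maps, shrinking to the saturated open set $W=(X|G)\setminus\pi(X\setminus U)$ with $x^*\in W\subset\pi(\overline{U})$. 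Your route is longer but self-contained: it uses only the closedness and properness already established in Theorem \ref{th-2}, plus the standard fact that a compact subset of a locally compact Hausdorff space has a relatively compact open neighborhood, and so it closes a small gap that the paper's one-line version leaves to the reader. Both arguments are valid; the paper's is shorter once openness of $\pi$ is granted.
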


\begin{proof}
Suppose that $X$ is compact. Then the orbit space $X|G=\pi(X)$ is compact, since this is a continuous image of $X$. Conversely, let $X|G$ be compact. By Theorem \ref{th-2} the orbit projection $\pi:X\to X|G$ is a proper map. Therefore, $X$ is compact.

Let $X$ be a locally compact space. Take any element $x^*\in X|G$. Since $X$ is locally compact, the point $x\in X$ has an open neighborhood $U$ with compact closure $\overline{U}$. Its image $\pi(U)$ is an open neighborhood of $x^*$ with compact closure $\pi(\overline{U})$. Conversely, suppose that $X|G$ is locally compact. Take any element $x\in X$. Let $U\subset X|G$ be an open neighborhood of $x^*=\pi(x)\in X|G$ with compact closure $\overline{U}$. Then $\pi^{-1}(U)\subset X$ is an open neighborhood of $x$ with compact closure $\pi^{-1}(\overline{U})$ by Theorem \ref{th-2}.
\end{proof}

\end{document}